\newtheorem{theorem}[subsubsection]{Theorem}
\theoremstyle{definition}
\newtheorem{remark}[subsubsection]{Remark}
\newcommand{\Z}{\ensuremath{\mathbb{Z}}}
\newcommand{\C}{\ensuremath{\mathbb{C}}}
\renewcommand{\P}{\ensuremath{\mathbb{P}}}
\newcommand{\M}{\ensuremath{\overline{\mathcal{M}}}}
\renewcommand{\O}{\ensuremath{\mathcal{O}}}
\newcommand{\X}{\ensuremath{\mathcal{X}}}
\newcommand{\ev}{\ensuremath{\textrm{ev}}}
\newcommand{\vir}{\ensuremath{\textrm{vir}}}
\newcommand{\ch}{\ensuremath{\textrm{ch}}}
\newcommand{\CZr}{\ensuremath{[\mathbb{C}/\mathbb{Z}_r]}}
\renewcommand{\a}{\ensuremath{\mathbf{a}}}
\begin{document}

\title{Relations on $\M_{g,n}$ via orbifold stable maps}
\author{Emily Clader}
\date{}
\thanks{Partially supported by FRG grant DMS-1159265 and RTG grant DMS-1045119.}

\begin{abstract}
Using the equivariant virtual cycle of the moduli space of stable maps to $[\C/\Z_r]$, or equivalently, the vanishing of high-degree Chern classes of a certain vector bundle over the moduli space of stable maps to $B\Z_r$, we derive relations in the Chow ring of $\M_{g,n}(B\Z_r,0)$.  These push forward to yield tautological relations on $\M_{g,n}$. 
\end{abstract}

\maketitle

\section{Introduction}

The tautological ring of the moduli space of curves is the minimal family of subrings $R^*(\M_{g,n})$ inside the cohomology (or Chow ring) of each $\M_{g,n}$ that is stable under pullbacks and pushforwards via the forgetful and attaching maps between these moduli spaces.  In particular, the kappa, psi, lambda, and boundary classes, which play important roles in Gromov-Witten theory, are all elements of the tautological ring.

Relations between tautological classes in $\mathcal{M}_g$ were initially studied by Mumford \cite{Mumford} in the 1980s.  More recently, ideas from topological string theory have been applied to develop a more complete picture.  In 2000, Faber and Zagier conjectured the so-called FZ relations on $\mathcal{M}_g$, which were proved via the geometry of stable quotients in 2010 \cite{PP}.  A remarkable generalization occurred in 2013, when Pandharipande-Pixton-Zvonkine \cite{PPZ} used Witten's $3$-spin theory and the method of quantization developed by Givental and Teleman \cite{GiventalSemisimple, Teleman} to prove that a set of relations previously conjectured by Pixton \cite{Pixton} holds in the cohomology of $\M_{g,n}$.  When restricted to $\mathcal{M}_g$, these relations recover the FZ relations, and indeed, all presently known tautological relations follow from those proved in \cite{PPZ}.

The $3$-spin relations \cite{PPZ} were cast in a new framework by Janda in \cite{Janda}, which outlines a general method for deriving tautological relations via the existence of certain limits when a semisimple Cohomological Field Theory moves toward a nonsemisimple point.  This method can be used to prove that the $3$-spin relations hold in the Chow ring of $\M_{g,n}$, and that they are equivalent to relations that arise out of the geometry of equivariant stable maps to $\P^1$.

The main result of the present paper is the following:

\begin{theorem}
\label{maintheorem}
Suppose that $a_1, \ldots, a_n \in \{0, 1, \ldots, r-1\}$ satisfy $0 \neq \sum_{i=1}^n a_i \equiv 0 \mod r$.  Then, for any $d$ such that
\[\frac{1}{r}\sum_{i=1}^n a_i +g -1 < d \leq 3g-3+n,\]
one has the following relations in $A^d(\M_{g,\a}(B\Z_r, 0))$:

\[0 = \hspace{-0.4cm} \sum_{\substack{d_1 + \cdots + d_m = d\\ d_i, m \geq 1}} \frac{1}{m! \prod_{i} d_i(d_i+1)} \prod_{i=1}^m \left(B_{d_i+1}(0)\kappa_{d_i} - \sum_{j=1}^n B_{d_i+1}\left(\frac{a_j}{r}\right)\psi_j^{d_i} +\right.\]
\[\left.\frac{r}{2} \sum_{\substack{0 \leq l \leq g\\ I \subset [n]}}B_{d_i+1}\left(\frac{q_{l,I}}{r}\right) p^*i_{(l,I)*}(\gamma_{d_i-1}) + \frac{r}{2} \sum_{q=0}^{r-1} B_{d_i+1}\left(\frac{q}{r}\right)j_{(irr,q)*}(\gamma_{d_i-1})\right).\]

\vspace{0.2cm}

\noindent If $a_1 = \cdots = a_n = 0$, the same equations hold for $g > 0$ on components of $\M_{g,\mathbf{0}}(B\Z_r, 0)$ corresponding to curves with a {\it nontrivial} $r$-torsion line bundle.

In particular, the pushfowards of these equations via the map
\[\rho: \M_{g,\a}(B\Z_r, 0) \rightarrow \M_{g,n}\]
give tautological relations $A^d(\M_{g,n})$.
\end{theorem}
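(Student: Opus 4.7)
The plan is to exhibit these identities as the vanishing, in sufficiently high degree, of Chern classes of a natural vector bundle on $\M_{g,\a}(B\Z_r,0)$, and to translate that vanishing into the stated tautological form by means of Chiodo's Grothendieck--Riemann--Roch formula.

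Let $\pi:\mathcal{C}\to\M_{g,\a}(B\Z_r,0)$ denote the universal orbifold curve and let $\mathcal{L}$ be the universal line bundle associated to the standard character of $\Z_r$, with monodromies $a_j/r$ prescribed at the markings. Since $r\deg\mathcal{L}=-\sum a_j$, our assumption $0\neq\sum a_j\equiv 0\bmod r$ makes $\deg\mathcal{L}<0$ on every fiber, so $\pi_*\mathcal{L}=0$ and $R^1\pi_*\mathcal{L}$ is a vector bundle of rank $\tfrac{1}{r}\sum a_j+g-1$ by Riemann--Roch; the analogous statement holds on the specified components of $\M_{g,\mathbf{0}}(B\Z_r,0)$ when the $r$-torsion root is nontrivial. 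Consequently $c_d(R^1\pi_*\mathcal{L})=0$ in the whole range of $d$ claimed.

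To match the displayed form, I would use the identity $\log c(E)=\sum_{k\geq 1}(-1)^{k-1}(k-1)!\,\ch_k(E)$ applied to the K-theoretic class $R^1\pi_*\mathcal{L}=-R\pi_*\mathcal{L}$, obtaining
\[c_d(R^1\pi_*\mathcal{L})=\sum_{\substack{d_1+\cdots+d_m=d\\ d_i,m\geq 1}}\frac{(-1)^d}{m!}\prod_{i=1}^m(d_i-1)!\,\ch_{d_i}(R\pi_*\mathcal{L}).\]
Chiodo's formula then expresses each $\ch_{d_i}(R\pi_*\mathcal{L})$ as exactly the Bernoulli-polynomial expression appearing in parentheses in the theorem, divided by $(d_i+1)!$. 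Combining $(d_i-1)!/(d_i+1)!=1/(d_i(d_i+1))$ with the $1/m!$ prefactor (and discarding the nonzero overall sign $(-1)^d$) reproduces the stated coefficient. Finally, since $\rho$ is proper and tautological, pushing forward the resulting class-level identities yields the claimed tautological relations on $\M_{g,n}$.

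The main obstacle is the precise identification of the boundary terms from Chiodo's formula: one must match the indexing sets $(l,I)$ for separating nodes and the residues $q\in\{0,\ldots,r-1\}$ for non-separating nodes with the local monodromy of the universal $r$-th root at each node, and correctly account for the factor $r/2$ that arises from the degree-$2$ automorphisms of the nodal gluing stack. A secondary technical point is confirming that $\pi_*\mathcal{L}=0$ on every stratum (not only generically), so that $R^1\pi_*\mathcal{L}$ is a genuine vector bundle and not merely a K-theoretic virtual object; this reduces to a case check on normalized nodal $r$-twisted curves, and it is precisely this step where the nontriviality hypothesis in the $\a=\mathbf{0}$ case is essential.
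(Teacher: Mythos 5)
Your proposal is correct, and it is essentially the route the paper itself describes in its closing subsection (``An alternative perspective''): the vanishing of $c_d(R^1\pi_*\mathscr{L})$ for $d$ exceeding the rank $\frac{1}{r}\sum a_i + g - 1$, unpacked via $\log c(E)=\sum_{k\geq 1}(-1)^{k-1}(k-1)!\,\ch_k(E)$ and Chiodo's formula. The paper's \emph{official} proof instead phrases this as the existence of the nonequivariant limit of $e^{\C^*}(-R^{\bullet}\pi_*\mathscr{L})$, where the terms with negative powers of the equivariant parameter $\lambda$ must vanish; but since $\lambda$ only records the cohomological grading, the two arguments are the same computation, and your coefficient bookkeeping $(d_i-1)!/(d_i+1)!=1/(d_i(d_i+1))$ with the overall $1/m!$ and discarded sign $(-1)^d$ matches the stated formula exactly. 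The one point where you are slightly looser than the paper is the claim that $\deg\mathscr{L}<0$ ``on every fiber'' forces $\pi_*\mathscr{L}=0$: negative total degree does not by itself kill sections on a nodal curve, and the paper's actual argument is that $\deg(|L|)\leq 0$ on \emph{every} irreducible component (since $|L|^{\otimes r}\cong\O(-\sum a_i[x_i])$ restricted there) while it is strictly negative on at least one, whence $H^0(|L|)=0$. You flag this as the needed ``case check on normalized nodal $r$-twisted curves,'' and you correctly identify it as the step where the nontriviality hypothesis enters when $\a=\mathbf{0}$, so the gap is one of detail rather than of substance.
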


The definitions of the classes appearing in the theorem are reviewed in Section \ref{ChiodosFormula}.  We comment that, while the pushed-forward relations should follow from Pixton's 3-spin relations (which, conjecturally, generate all relations in $R^*(\M_{g,n})$), the initial equations in $\M_{g,\a}(B\Z_r, 0)$ are not pulled back from the moduli space of curves; see Remark \ref{pullback}.

These relations can be viewed, analogously to the context of \cite{Janda}, as a consequence of the existence of the nonequivariant limit in the Cohomological Field Theory associated to the equivariant Gromov-Witten theory of the orbifold $[\C/\Z_r]$.  Alternatively, they can be derived easily from the vanishing of high-degree Chern classes of a certain vector bundle on $\M_{g,n}(B\Z_r, 0)$, which are expressible in terms of tautological classes by Grothendieck-Riemann-Roch and the work of Chiodo \cite{ChiodoTowards}.  While the second perspective is more direct and leads naturally to a family of generalizations, we suspect that the equivariant point-of-view is important in understanding the connection between our results and the 3-spin relations.

\subsection{Future work}

In \cite{CJ}, we will study an important application of Theorem \ref{maintheorem}: it can be used to prove a conjecture of Pixton, namely that his proposed formula for the double ramification cycle vanishes in cohomological degrees past $g$ (see Section 4 of \cite{Cavalieri} for more details on Pixton's formula).  We hope, furthermore, to explicitly exhibit how the relations on $\M_{g,n}$ obtained from Theorem \ref{maintheorem} can be expressed in terms of the 3-spin relations.

\subsection{Outline of the paper}

In Section \ref{preliminaries}, we review the basic definitions from orbifold Gromov-Witten theory that we will require.  Most importantly, we express the equivariant virtual cycle for (most) components of the moduli space of stable maps to $[\C/\Z_r]$ as the Chern class of a vector bundle over $\M_{g,n}(B\Z_r,0)$.  Via Chiodo's Grothendieck-Riemann-Roch formula \cite{ChiodoTowards}, this affords us an expression for the equivariant virtual cycle in terms of tautological classes, which we present in Section \ref{relations} and use to prove Theorem \ref{maintheorem}.  We conclude with a discussion of a different perspective on the theorem, which circumvents the equivariant theory and the orbifold $\CZr$ entirely, and which can be naturally generalized to other moduli spaces $\M_{g,n}(BG, 0)$.

\subsection{Acknowledgments}

The author is particularly indebted to Y. Ruan and F. Janda for numerous invaluable conversations and insights.  The author would also like to thank A. Chiodo, R. Pandharipande, D. Petersen, A. Pixton, D. Ross, and D. Zvonkine for useful conversations and comments.  A computer program by D. Johnson for calculating top intersections on $\M_{g,n}$ was very helpful for studying cases of the main theorem. 

\section{Preliminaries on $[\C/\Z_r]$ and its Gromov-Witten theory}
\label{preliminaries}

\subsection{The orbifold and its cohomology}

By $[\C/\Z_r]$, we mean the stack quotient of $\C$ by the action of $\Z_r$ via multiplication by $r$th roots of unity.

The orbifold (or Chen-Ruan) cohomology of $\CZr$ is, as a vector space, the cohomology of the inertia stack $I \CZr$, whose objects are pairs $(x,g)$ with $x \in \CZr$ and $g$ an element of the isotropy group at $x$.  Explicitly,
\[I\CZr = \CZr \sqcup \bigsqcup_{i=1}^{r-1} B\Z_r,\]
in which $B\Z_r$ denotes the stack consisting of a single point with $\Z_r$ isotropy.  Thus,
\[H^*_{CR}(\CZr;\C) = \C\{\zeta_0, \zeta_1, \ldots, \zeta_{r-1}\},\]
where $\zeta_0$ is the constant function $1$ on $\CZr$ and $\zeta_i$ is the constant function $1$ on the $i$th copy of $B\Z_r$.  The component of the inertia stack whose cohomology is generated by $\zeta_0$ is called the {\it nontwisted sector}, while the other components are referred to as {\it twisted sectors}.

\subsection{Stable maps to orbifolds}

For an orbifold $\X$, let $\M_{g,n}(\X,d)$ denote the moduli stack of $n$-pointed genus-$g$ orbifold stable maps to $\X$ of degree $d$.  This stack parameterizes families of genus-$g$, $n$-pointed orbifold curves $C$ equipped with a representable morphism $f: C \rightarrow \X$ for which the corresponding map between coarse underlying spaces is a stable map of degree $d$.  The condition of representability means that the induced homomorphism on isotropy groups at every point is injective.

The precise definition of an orbifold curve can be found, for example, in \cite{AV}.  For the present, we recall just two particularly important features.  First, orbifold curves are allowed isotropy only at marked points and nodes.  Second, the orbifold structure at nodes is required to be {\it balanced}, which means that \'etale locally near each node, $C$ has the form
\[\left[\{xy = 0\}/\Z_{\ell}\right]\]
with $\Z_{\ell}$ acting by $\xi(x,y) = (\xi x, \xi^{-1} y)$.

The moduli stack $\M_{g,n}(\X,d)$ admits a perfect obstruction theory relative to the Artin stack of prestable pointed orbicurves, and this obstruction theory can be used to define a virtual fundamental class
\[[\M_{g,n}(\X,d)]^{\vir} \in H^*(\M_{g,n}(\X,d); \C).\]
As in the non-orbifold case, the perfect obstruction theory is given by the object $(R^{\bullet}\pi_* f^* T\X)^{\vee}$ in the derived category, where
\[\xymatrix{
\mathcal{C} \ar[r]^{f}\ar[d]_{\pi} & \X\\
\M_{g,n}(\X,d)
}\]
is the universal family over the moduli space.

Also analogously to the non-orbifold theory, there are evaluation morphisms, but in this case they map to the inertia stack:
\[\ev_i: \M_{g,n}(\X,d) \rightarrow I\X.\]
Specifically, the image of $(f: C \rightarrow \X)$ can be viewed as the point $(x,g) \in I\X$, where $x$ is the image of the $i$th marked point $p_i$ and $g$ is the image of the canonical generator of the isotropy group at $p_i$ under the homomorphism of isotropy groups $G_{p_i} \rightarrow G_x$ induced by $f$.

There is a decomposition of $\M_{g,n}(\X,d)$ according to the images of the evaluation maps:
\begin{equation}
\label{decomposition}
\M_{g,n}(\X,d) = \bigsqcup_{a_1, \ldots, a_n \in \mathcal{I}} \ev_1^{-1}(\X_{a_1}) \cap \cdots \cap \ev_n^{-1}(\X_{a_n}),
\end{equation}
where
\[I\X = \bigsqcup_{i \in \mathcal{I}} \X_{i}\]
decomposes $I\X$ into twisted sectors.  We denote
\[\M_{g,\mathbf{a}} (\X,d) = \ev_1^{-1}(\X_{a_1}) \cap \cdots \cap \ev_n^{-1}(\X_{a_n}),\]
where $\mathbf{a} = (a_1, \ldots, a_n)$.  Each of these components obtains a virtual fundamental class by restriction.

\subsection{Stable maps to $\CZr$}

In the special case where $\X = \CZr$, there are no stable maps of positive degree.  The decomposition (\ref{decomposition}) of the moduli space $\M_{g,n}(\CZr, 0)$ is written
\[\M_{g,n}(\CZr, 0) = \bigsqcup_{a_1, \ldots, a_n \in \Z_r} \M_{g,\mathbf{a}} (\CZr, 0),\]
where $\M_{g,\mathbf{a}}(\CZr,0)$ is the component in which the $i$th marked point maps to the twisted sector indexed by $\zeta_i$.  (We identify $\Z_r$ with the set $\{0,1, \ldots, r-1\}$ whenever it appears as an index set.)

We observe that the substack $\M_{g, \mathbf{a}}(\CZr, 0)$ is:
\begin{enumerate}
\item Compact if and only if not every $a_i$ is equal to zero.\footnote{There will, however, be at least one compact component of $\M_{g,(0,\ldots, 0)}(\CZr,0)$ when $g >0$; see Remark \ref{zero}.}
\item Nonempty if and only if $\sum_{i=1}^n a_i \equiv 0 \mod r$.
\end{enumerate}
The proof of the first of these assertions is a straightforward consequence of the compactness of $\M_{g,n}(\X, 0)$ for compact targets, while the second is a standard fact in orbifold Gromov-Witten theory; alternatively, it will follow from the description of the moduli space in terms of line bundles given below.

Unless otherwise specified, we will always work only on components for which not every $a_i$ is zero, to ensure compactness.  In this case, $\M_{g, \a}(\CZr, 0)$ is isomorphic to $\M_{g,\a}(B\Z_r, 0)$.  However, their obstruction theories are different; while there are obstructions to deformations of stable maps to $\CZr$ parameterized by $H^1(C, f^*N_{B\Z_r/\CZr})$, the moduli space of stable maps to $B\Z_r$ is smooth and unobstructed.  This leads to the following relationship between virtual cycles:
\begin{equation}
\label{twisted1}
[\M_{g,\a}(\CZr,0)]^{\vir} = [\M_{g,\a}(B\Z_r, 0)] \frown e(R^1\pi_*f^*N_{B\Z_r/\CZr}).
\end{equation}

Another perspective on stable maps to $B\Z_r$ can be used to make equation (\ref{twisted1}) more explicit.  The data of an orbifold stable map $C \rightarrow B\Z_r$ is equivalent to an orbifold line bundle $L$ on $C$ satisfying $L^{\otimes r} \cong \O_C$.  The map $f: C \rightarrow B\Z_r$ corresponds to the line bundle $f^*N$, where $N$ is the topologically trivial line bundle over $B\Z_r$ on which $\Z_r$ acts by multiplication by $r$th roots of unity.

From this point of view, $f^*N_{B\Z_r/\CZr}$ corresponds to the universal line bundle $\mathscr{L}$ on the universal curve over $\M_{g,n}(B\Z_r, 0)$.  Furthermore, the monodromy $a_i$ becomes the {\it multiplicity} of $L$ at the $i$th marked point--- that is, $a_i$ is the weight of the action of the isotropy group at the marked point $p_i$ on the fiber of $L$.

Much more about orbifold line bundles and their tensor powers can be found, for example, in \cite{Chiodo}.  One crucial fact is the relationship between an orbifold line bundle $L$ and its pushforward $|L|$ under the map $\epsilon: C \rightarrow |C|$ to the coarse underlying curve.  Suppose that $C_0$ is an irreducible component of $C$ with special points $x_1, \ldots, x_k$ at which $L$ has multiplicities $a_1, \ldots, a_k$.  Then, if $L^{\otimes r} \cong \epsilon^*M$ for a line bundle $M$ on $|C|$, the coarse underlying line bundle $|L| = \epsilon_*L$ satisfies
\[|L|^{\otimes r} \cong M \otimes \left( -\sum_{i=1}^k a_i [x_i]\right)\]
when restricted to $|C_0|$.

In the case of interest, $M = \O_C$, and the above formula implies that if $C$ has no nodes, then
\begin{equation}
\label{degree}
\deg(|L|) = -\frac{1}{r}\sum_{i=1}^n a_i,
\end{equation}
and in general, an analogous formula holds on each component of the normalization of $C$.  There are two important consequences of (\ref{degree}).  First, since $|L|$ is a line bundle on a non-orbifold curve, it must have integral degree, so we recover the fact mentioned previously that $\sum_{i=1}^n a_i \equiv 0 \mod r$.  Second, because at least one $a_i$ is nonzero, there is at least one component on which $\deg(|L|) < 0$.  Since $\deg(|L|) \leq 0$ on every component, this implies that $H^0(L) = H^0(|L|) = 0$.

Combining these observations, (\ref{twisted1}) gives the following expression for the virtual cycle of the moduli space of stable maps to $\CZr$ in terms of twisted theory over $B\Z_r$:
\begin{equation}
\label{twisted2}
[\M_{g,\a}(\CZr,0)]^{\vir} = [\M_{g,\a}(B\Z_r, 0)] \frown e(-R^{\bullet}\pi_*\mathscr{L}).
\end{equation}
This expression, and its equivariant version (\ref{equivariant}), will be central to the computations that follow.

\begin{remark}
\label{zero}
If $a_1 = \cdots = a_n = 0$, then one has $\text{deg}(L) = 0$ on every component of $C$, so the above argument that $H^0(L) = 0$ fails and (\ref{twisted2}) is no longer valid in general.  However, in this case, the moduli space $\M_{g,\mathbf{0}}(B\Z_r, 0)$ of $r$-torsion line bundles splits into components, on each of which $L$ is either always trivial or always nontrivial.  On nontrivial components, which exist as long as $g>0$, an $r$-torsion line bundle can have no global sections.   Thus, (\ref{twisted2}) still holds on these components of the moduli space.
\end{remark}

\subsection{Equivariant theory}

There is an action of $\C^*$ on $\CZr$ via multiplication, and this induces an action on $\M_{g,n}(\CZr,0)$.  The perfect obstruction theory is $\C^*$-equivariant, and as a result, one obtains an equivariant version of the virtual fundamental class.

In terms of the expression (\ref{twisted2}) on components where not every $a_i$ is zero, the equivariant virtual fundamental class is
\begin{equation}
\label{equivariant}
\M_{g,\a}(\CZr,0)]^{\C^*, \vir} = [\M_{g,\a}(B\Z_r, 0)]^{\C^*} \frown e^{\C^*}(-R^{\bullet}\pi_*\mathscr{L}),
\end{equation}
in which $\C^*$ acts trivially on $\M_{g,\a}(B\Z_r, 0)$ and by multiplication on the fibers of the vector bundle $-R^{\bullet}\pi_*\mathscr{L}$.  Despite the triviality of the action on these components, the above expression is useful, because the equivariant Euler class can easily be written in terms of Chern characters, which affords the possibility of applying the Grothendieck-Riemann-Roch formula.

\section{Proof of Theorem \ref{maintheorem}}
\label{relations}

In this section, we write (\ref{equivariant}) explicitly in terms of tautological classes on $\M_{g, \a}(B\Z_r, 0)$, and we derive Theorem \ref{maintheorem} via the existence of the nonequivariant limit.

\subsection{Introducing Chern characters}

Any multiplicative invertible characteristic class can be written in terms of Chern characters.  For the $\C^*$-equivariant Euler class, one obtains
\begin{equation}
\label{euler}
e^{\C^*}((-V)^{\vee}) = \exp\left(\sum_{d=0}^{\infty} s_d \ch_d([V])\right)
\end{equation}
for
\[s_d = \begin{cases} -\ln(\lambda) & d=0\\ \;\\ \displaystyle\frac{(d-1)!}{\lambda^d} & d>0,\end{cases}\]
where $\lambda$ is the equivariant parameter.

The rank of the vector bundle $-R^{\bullet}\pi_*\mathscr{L}$ can be computed using the Riemann-Roch formula; it equals
\[\text{rank}(-R^{\bullet}\pi_*\mathscr{L}) = h^1(|L|) = -\deg(|L|) + g -1 = \frac{1}{r}\sum_{i=1}^n a_i + g -1\]
for any element $(C,L)$ of $\M_{g,\a}(B\Z_r, 0)$.  Combining this with (\ref{equivariant}) and (\ref{euler}) expresses the equivariant virtual cycle of $\M_{g, \mathbf{a}}(\CZr, 0)$ as the Poincar\'e dual of
\begin{equation}
\label{twisted3}
e^{\C^*}(-R^{\bullet}\pi_*\mathscr{L}) = (-1)^{\text{deg}}\lambda^{\frac{1}{r}\sum a_i + g-1} \exp\left(\sum_{d=1}^{\infty} \frac{(d-1)!}{\lambda^d} \ch_d(R^{\bullet}\pi_*\mathscr{L})\right).
\end{equation}
Here, $\text{deg}$ denotes the nonequivariant degree (that is, the cohomological degree of an equivariant class after setting $\lambda =1$), so $(-1)^{\text{deg}}$ should be understood as an operator on $A^*_{\C^*}(\M_{g, \mathbf{a}}(\CZr, 0))$.

\subsection{GRR for the universal $r$th root}
\label{ChiodosFormula}

By applying the orbifold Grothendieck-Riemann-Roch formula (see Appendix A of \cite{Tseng})--- or the ordinary GRR formula to coarse underlying curves--- the Chern characters of $R^{\bullet}\pi_*\mathscr{L}$ can be expressed in terms of tautological classes.

In \cite{ChiodoTowards}, Chiodo carries out this computation explicitly and substantially simplifies the result.  Strictly speaking, his computation is carried out on a slightly different moduli space from $\M_{g,\mathbf{a}}(B\Z_r, 0)$, parameterizing $r$th roots of $\O(-\sum_i a_i[x_i])$ on curves with no orbifold structure at marked points and balanced $\Z_r$ orbifold structure at every node.  In particular, the representability assumption does not appear in his moduli space, so it will only agree with $\M_{g,\mathbf{a}}(B\Z_r, 0)$ when all of the multiplicities at nodes are coprime to $r$.   However, there is always a morphism between his moduli space $\M_{g,n}^r$ and the moduli space $\M_{g,\mathbf{a}}(B\Z_r, 0)$ appearing here, which is invertible on the interior and has explicitly-computable ramification indices on the boundary.  Thus, we can state our formula on $\M_{g,\mathbf{a}}(B\Z_r, 0)$, but all of the classes in the formula will be pulled back from $\M_{g,n}^r$.

Corollary 3.1.8 of \cite{ChiodoTowards} states, in the present situation, that
\[\ch_d(R^{\bullet}\pi_*\mathscr{L}) = \frac{B_{d+1}(0)}{(d+1)!}\kappa_d - \sum_{i=1}^n \frac{B_{d+1}(\frac{a_i}{r})}{(d+1)!}\psi_i^d \;+ \hspace{3.5cm}\]
\[\hspace{0.75cm}\frac{r}{2} \sum_{\substack{0 \leq l \leq g\\ I \subset [n]}} \frac{B_{d+1}\left(\frac{q_{l,I}}{r}\right)}{(d+1)!} p^*i_{(l,I)*}(\gamma_{d-1}) + \frac{r}{2} \sum_{q=0}^{r-1} \frac{B_{d+1}(\frac{q}{r})}{(d+1)!}j_{(irr,q)*}(\gamma_{d-1}).\]

Let us review the notation appearing in this formula.  First, $B_{d+1}(x)$ are the Bernoulli polynomials, defined by the generating function
\[\frac{te^{xt}}{e^t -1} = \sum_{n=0}^{\infty} B_n(x) \frac{t^n}{n!}.\]
The kappa and psi classes are defined by pullback under the map
\[p: \M_{g,n}(B\Z_r, 0) \rightarrow \M_{g,n}(r),\]
where $\M_{g,n}(r)$ is the moduli space of $r$-stable, $n$-pointed, genus-$g$ curves.  The notion of $r$-stability is defined precisely in Definition 2.1.1 of \cite{ChiodoTowards}, but essentially, it refers to orbifold curves in which the marked points have trivial isotropy and the nodes are balanced with $\Z_r$ isotropy.  On this moduli space, kappa and psi classes are defined as usual:
\[\kappa_d = \pi_*(c_1(\omega_{\log})^d), \hspace{1cm} \psi_i = c_1(\omega|_{p_i}),\]
where $\omega_{\log} = \omega_{C/X}(\sum_{i=1}^n [p_i])$.

Let $Z$ be the codimension-two locus inside the universal curve over $\M_{g,n}(r)$ consisting of nodes in singular fibers.  There is a decomposition
\begin{equation}
\label{Z}
Z = \bigsqcup_{\substack{0 \leq l \leq g\\ I \subset [n]}} Z_{(l, I)} \sqcup Z_{irr},
\end{equation}
where $Z_{(l, I)}$ consists of nodes separating the curve $C$ into a component of genus $l$ containing the marked points in $I$ and a component of genus $g-l$ containing the other marked points, while $Z_{irr}$ consists of nonseparating nodes.  Let $Z' \rightarrow Z$ be the two-fold cover given by nodes together with a choice of branch.  The decomposition (\ref{Z}) induces an analogous decomposition of $Z'$, and the morphisms
\[i_{(l,I)}: Z'_{(l, I)} \rightarrow \M_{g,n}(r)\]
are given by this two-fold cover, followed by the inclusion into the universal curve and the projection to $\M_{g,n}(r)$.

The index $q_{l,I}$ is the multiplicity at the chosen branch of the node for any point in $Z'_{(l,I)}$.  The equation (\ref{degree}) determines this multiplicity from $l$ and $I$; specifically, it is the unique number in $\{0,1, \ldots, r-1\}$ satisfying
\[q_{l,I} + \sum_{i \in I} a_i \equiv 0 \mod r.\]
If $\psi$ is the first Chern class of the line bundle over $Z'_{(l,I)}$ whose fiber is the cotangent line to the chosen branch of the node, and $\hat{\psi}$ is the first Chern class of the bundle whose fiber is the cotangent line to the opposite branch, then $\gamma_d$ is defined by
\[\gamma_d = \sum_{i+j=d} (-\psi)^i \hat{\psi}^j.\]

Finally, let $Z'_{(irr,q)}$ be the locus inside the universal curve over $\M_{g,n}^r$ consisting of nonseparating nodes with a choice of branch such that the multiplicity of the line bundle $L$ at the chosen branch is equal to $q \in \{0, 1,\ldots, r-1\}$.  It should be noted that $q$ is not determined by the topology of the underlying curve alone, as the multiplicity was in the case of separating nodes, so we must view $Z'_{(irr,q)}$ as lying over $\M_{g,n}^r$ rather than over $\M_{g,n}(r)$.  We have morphisms
\[j_{(irr,q)}: Z'_{(irr,q)} \rightarrow \M_{g,n}^r\]
given, as before, by the two-fold cover, inclusion into the universal curve, and projection.  The class $\gamma_d$ is defined by exactly the same formula as previously.  By pullback under the birational map discussed above, the class $j_{(irr,q)*}(\gamma_{d-1})$ is viewed as lying on $\M_{g,\mathbf{a}}(B\Z_r,0)$, and, by abuse of notation, it is still denoted $j_{(irr,q)*}(\gamma_{d-1})$.

\subsection{Tautological relations}

Equipped with this formula, the proof of Theorem \ref{maintheorem} is immediate from (\ref{twisted3}):

\begin{proof}[Proof of Theorem \ref{maintheorem}]
The existence of the nonequivariant limit in (\ref{twisted3}) implies that any term with a negative power of $\lambda$ must vanish, which proves the theorem whenever not every $a_i$ is zero.  The case $a_1 = \cdots = a_n = 0$ follows from the same argument after applying Remark \ref{zero}.
\end{proof}

\begin{remark}
\label{pullback}
While pushforward yields relations in $A^d(\M_{g,n})$, the original relations in $A^d(\M_{g,\mathbf{a}}(B\Z_r, 0))$ do not arise by pullback from the moduli space of curves.  The reason for this is the appearance of the classes $j_{(irr,q)*}(\gamma_{d-1})$, which are not pulled back because the multiplicity at a nonseparating node is not determined by the topology of the coarse underlying curve.
\end{remark}

\subsection{An alternative perspective}

There is a different way to understand the proof of Theorem \ref{maintheorem} that circumvents $\CZr$ entirely.  Namely, let
\[M = \M_{g,\mathbf{a}}(B\Z_r, 0),\]
and let $V$ be the vector bundle $-R^{\bullet}\pi_*\mathscr{L}$ on this stack.

The total Chern class of a vector bundle can always be written in terms of its Chern characters:
\[c(V) = \exp\left(\sum_{d \geq 1} (-1)^{d+1}(d-1)! \cdot \text{ch}_d(V)\right).\]
While the exponential appears to have contributions in every degree, the total Chern class clearly vanishes beyond the rank of the vector bundle.  Theorem \ref{maintheorem} is a consequence of this vanishing when expanded in terms of the Chern characters on the right-hand side.

In particular, only two facts are necessary for the proof:
\begin{enumerate}
\item One has a moduli space $M$ with a map to $\M_{g,n}$;
\item On $M$, there is a vector bundle $V$ whose Chern characters can be expressed in terms of classes whose pushforwards to $\M_{g,n}$ are tautological.
\end{enumerate}

Any such situation will yield an analogous collection of tautological relations.  For example, consider the moduli space $\M_{g,n}(BG, 0)$ for any finite group $G$.  Fixing a character
\[\rho: G \rightarrow \C^*\]
yields an orbifold line bundle $N_{\rho}$ on $BG$, which is topologically trivial and for which the isotropy group $G$ acts on the fiber via $\rho$.  If $\mathcal{L}_{\rho}$ denotes the pullback of $N_{\rho}$ to the universal curve over $\M_{g,n}(BG, 0)$, then, for the same reason as above, $R^1\pi_*\mathcal{L}_{\rho}$ will be a vector bundle on any component $\M_{g,\a}(BG, 0)$ on which at least one of the monodromies $a_i$ is not in the kernel of $\rho$.

Thus, one can take $M$ to be such a component of $\M_{g,n}(BG, 0)$ and $V$ to be the restriction of $R^1\pi_*\mathcal{L}_{\rho}$ to this component.  By an application of the orbifold Grothendieck-Riemann-Roch theorem precisely analogous to Chiodo's, an expression for the Chern characters of $R^1\pi_*\mathcal{L}_{\rho}$ in terms of tautological classes can be obtained.  In this way, any choice of $G$ and $\rho$ as above can be used to derive relations.

\bibliographystyle{abbrv}
\bibliography{TautologicalBibClader}
\nocite{*}

\end{document}